\theoremstyle{plain}
\newtheorem{Thm}{Theorem}[section]
\newtheorem{Lem}[Thm]{Lemma}
\theoremstyle{definition}
\newtheorem{remark}{Remark}
\numberwithin{equation}{section}
\begin{document}
\title[A proof of Rosenthal's \(\ell_1\)-Theorem]{A proof of Rosenthal's \(\ell_1\)-Theorem}
\author{I. Gasparis}
\address{School of Applied Mathematical and Physical Sciences \\
National Technical University of Athens\\
Athens 15780 \\
Greece}
\email{ioagaspa@math.ntua.gr}
\keywords{The \(\ell_1\)-theorem, weak Cauchy sequence, Ramsey theorem}
\subjclass{Primary: 46B03; Secondary: 0E302}
\begin{abstract}
A bounded sequence \((x_n)\) in a complex Banach space is called \(\epsilon\)-weak Cauchy, for
some \(\epsilon > 0\), if for all \(x^* \in B_{X^*}\) there exists some \(n_0 \in \mathbb{N}\)
such that \(|x^*(x_n) - x^*(x_m)| < \epsilon\) for all \(n \geq n_0\) and \(m \geq n_0\).
It is shown that given \(\epsilon > 0\) and a bounded sequence \((x_n)\) in a Banach space then either
\((x_n)\) admits an \(\epsilon\)-weak Cauchy subsequence or, for all \(\delta > 0\), there exists a subsequence \((x_{m_n})\)
satisfying 
\[\biggl \| \sum_{n=2}^\infty \lambda_n (x_{m_n} - x_{m_1}) \biggr \| \geq (\frac{\epsilon \sqrt{2}}{8} - \delta )
\sum_{n=2}^\infty (|\mathrm{Re} (\lambda_n)| + |\mathrm{Im} (\lambda_n)|)\]
for every finitely supported sequence of complex scalars \((\lambda_n)_{n=2}^\infty\).
This provides an alternative proof for both Rosenthal's \(\ell_1\)-theorem and its quantitative version
due to Behrends.
\end{abstract}
\maketitle
\section{Introduction}
The purpose of this article is to provide an alternative proof of Rosenthal's
\(\ell_1\)-theorem \cite{R}
\begin{Thm} \label{Tm}
Every bounded sequence in a (real or complex) Banach space admits a 
subsequence which is either weak Cauchy, or equivalent to the usual \(\ell_1\)-basis.
\end{Thm}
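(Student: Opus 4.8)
The plan is to derive Theorem~\ref{Tm} from the quantitative dichotomy announced in the abstract: for each $\epsilon>0$, a bounded sequence either has an $\epsilon$-weak Cauchy subsequence or, for every $\delta>0$, a subsequence $(x_{m_n})$ obeying the displayed lower $\ell_1$-estimate. Granting this, I would argue as follows. Suppose $(x_n)$ has \emph{no} subsequence equivalent to the usual $\ell_1$-basis. Fix $k$, apply the dichotomy to any subsequence $S$ with $\epsilon=1/k$ and a $\delta$ making $\frac{\epsilon\sqrt2}{8}-\delta>0$; the second alternative would give a sub-subsequence with a genuine $\ell_1$-lower estimate and hence (being bounded) equivalent to the $\ell_1$-basis, contradicting our hypothesis. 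So every subsequence of $(x_n)$ admits a $1/k$-weak Cauchy subsequence. Extract nested subsequences $(x^{(k)}_n)_n$, with $(x^{(k)}_n)_n$ being $1/k$-weak Cauchy, and pass to the diagonal $(y_k)=(x^{(k)}_k)$. For fixed $x^*\in B_{X^*}$ and $\eta>0$, choose $k$ with $1/k<\eta$: the tail $(y_j)_{j\ge k}$ is a subsequence of the $1/k$-weak Cauchy sequence $(x^{(k)}_n)_n$, so $(x^*(y_j))$ is eventually of oscillation $<\eta$. Hence $(y_k)$ is weak Cauchy. Since an $\ell_1$-equivalent sequence is never weak Cauchy (test against a Hahn--Banach extension of an alternating-sign functional), the two alternatives are genuinely exclusive, and Theorem~\ref{Tm} follows.

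It remains to prove the dichotomy, which is the real content. I would identify each $x_n$ with the weak$^*$-continuous function $\hat x_n(x^*)=x^*(x_n)$ on the compact set $K=(B_{X^*},w^*)$ and work with the real functions $\mathrm{Re}\,\hat x_n$ and $\mathrm{Im}\,\hat x_n$. If no subsequence is $\epsilon$-weak Cauchy, some $x^*$ witnesses $\limsup_{n,m}|x^*(x_n)-x^*(x_m)|\ge\epsilon$; since $|z|\le|\mathrm{Re}\,z|+|\mathrm{Im}\,z|$, one of the real sequences has oscillation at least $\epsilon/2$, so after relabelling there are rationals $a<b$ with $b-a$ close to $\epsilon/2$ and a real function $f_n$ (a real or imaginary part) crossing the gap $(a,b)$ persistently. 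Set $A_n=\{s\in K:f_n(s)>b\}$ and $B_n=\{s\in K:f_n(s)<a\}$.

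The combinatorial heart is then Rosenthal's lemma for the disjoint pairs $(A_n,B_n)$: either there is a \emph{Boolean-independent} subsequence (every finite sign-pattern is realized, $\bigcap_{n\in P}A_n\cap\bigcap_{n\in Q}B_n\neq\emptyset$ for all disjoint finite $P,Q$) or one on which, uniformly in $s\in K$, the sets $\{n:s\in A_n\}$ and $\{n:s\in B_n\}$ are not both infinite. I would obtain this by applying Ramsey's theorem to a coloring of finite index sets that records which intersection patterns of the $A_n,B_n$ are realized in $K$, and diagonalizing over the pattern length: on a homogeneous set either all patterns are realized (Boolean independence) or there is a single integer bounding the achievable ``alternation depth.'' \textbf{This is the main obstacle:} the passage from the countable combinatorics of Ramsey's theorem to a bound uniform over the uncountable index set $K$. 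What makes it go through is that Boolean independence is a purely \emph{countable} condition---nonemptiness of finitely many intersections---so the Ramsey-homogeneous set yields one integer controlling the oscillation of $(f_n(s))$ for \emph{every} $s$ at once, which gives the $\epsilon$-weak Cauchy alternative after treating the real and imaginary parts in turn.

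Finally, Boolean independence yields the $\ell_1$-lower estimate directly. For real scalars split the support of $(\lambda_n)$ by sign, pick $x^*\in\bigcap_{\lambda_n\ge0}A_n\cap\bigcap_{\lambda_n<0}B_n$ and $y^*$ with the roles of $A$ and $B$ reversed; then $\mathrm{Re}\,(x^*-y^*)(\sum\lambda_n x_{m_n})\ge(b-a)\sum|\lambda_n|$ while $\|x^*-y^*\|\le2$, so $\|\sum\lambda_n x_{m_n}\|\ge\frac{b-a}{2}\sum|\lambda_n|$. Splitting complex scalars into real and imaginary parts, recentering at $x_{m_1}$, and tracking the factors (the gap $b-a\approx\epsilon/2$, the divisor $2$, and the $\sqrt2$ relating $|z|$ to $|\mathrm{Re}\,z|+|\mathrm{Im}\,z|$) refines the constant to the stated $\frac{\epsilon\sqrt2}{8}-\delta$.
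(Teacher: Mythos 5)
Your combinatorial scheme reduces everything to Boolean independence of the pairs $(A_n,B_n)=(\{f_n>b\},\{f_n<a\})$ attached to a \emph{single real coordinate} (the real or the imaginary part of $\hat x_n$) crossing one rational gap. That is Rosenthal's original real-variable argument, and it does give the real-scalar bound $\|\sum\lambda_n x_{m_n}\|\ge\frac{b-a}{2}\sum|\lambda_n|$. The genuine gap is your final step: for \emph{complex} scalars, one-coordinate independence implies no $\ell_1$-lower estimate whatsoever, so no ``tracking of factors'' can produce $\frac{\epsilon\sqrt2}{8}-\delta$. Concretely, on $K=\{0,1\}^{\mathbb N}$ let $g_k(t)=t_k$ and $f_k=g_k+ig_{k+1}$. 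The real parts $g_k$ form a Boolean independent family for the gap $(\frac14,\frac34)$, yet taking $\lambda_k=(-i)^k$ one has $\lambda_j+i\lambda_{j-1}=(-i)^{j-1}(-i+i)=0$, so the sum telescopes: $\sum_{k=1}^N\lambda_kf_k=\lambda_1g_1+i\lambda_Ng_{N+1}$, whence $\bigl\|\sum_{k=1}^N\lambda_kf_k\bigr\|\le2$ while $\sum_{k=1}^N(|\mathrm{Re}\,\lambda_k|+|\mathrm{Im}\,\lambda_k|)=N$; the same cancellation kills the estimate for the differences $f_k-f_1$. This is exactly why the complex case (Dor \cite{D}) is not a formal consequence of the real one: one must localize \emph{both} coordinates of the function values simultaneously. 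That is what Lemma~\ref{L3} of the paper provides --- a two-dimensional independence in which, for arbitrary disjoint $J_1,J_2$, some $t\in K$ places $f_n(t)$ in a small square $\Delta_1$ for $n\in J_1$ and in a far-away small square $\Delta_2$ for $n\in J_2$ --- and what Lemma~\ref{L4} then exploits through the sign-coherent subset $J_2$ carrying a quarter of the $\ell_1$-mass and the inequality $(ax-by)^2+(bx+ay)^2\ge(a^2+b^2)/2$. Nothing in your proposal supplies this two-dimensional control, and the example shows it genuinely requires passing to a further subsequence (there, discarding consecutive indices), so it is not a cosmetic refinement of your coloring.

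There are two further, smaller gaps. First, in deriving Theorem~\ref{Tm} from the dichotomy you treat the second alternative as producing ``a sub-subsequence with a genuine $\ell_1$-lower estimate,'' but the estimate in Theorem~\ref{MTh} is for the difference sequence $(x_{m_n}-x_{m_1})_{n\ge2}$, which is \emph{not} a subsequence of $(x_n)$; the inequality $\|\sum_{n\ge2}\lambda_nx_{m_n}\|\ge\|\sum_{n\ge2}\lambda_n(x_{m_n}-x_{m_1})\|-|\sum_{n\ge2}\lambda_n|\,\|x_{m_1}\|$ gives nothing when $|\sum\lambda_n|$ is large, and an extra argument is required --- the paper imports it as Proposition~4.2 of Knaust--Odell \cite{KO}, which converts domination by the differences into domination by a tail $(x_{m_n})_{n\ge k}$. (Your real-scalar independence bound estimates $\sum\lambda_nx_{m_n}$ directly and would sidestep this, but then the ``recentering at $x_{m_1}$'' you invoke is both unnecessary and unjustified: independence controls set membership, not the size of $(x^*-y^*)(x_{m_1})$.) Second, your opening move fixes one functional, one coordinate and one rational gap; killing oscillation across that single gap cannot make a subsequence $\epsilon$-weak Cauchy, since other points of $K$ may oscillate across gaps located elsewhere. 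One must run the independence dichotomy over \emph{all} rational gaps of width comparable to $\epsilon$, in both coordinates, and diagonalize over this countable family; you gesture at such an iteration, and it is repairable, but as written the quantifier structure (``no subsequence is $\epsilon$-weak Cauchy'' versus ``the whole sequence is not $\epsilon$-weak Cauchy'') is conflated.
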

Rosenthal proved his result for real Banach spaces and subsequently Dor \cite{D}
settled the complex case. Most proofs of the \(\ell_1\)-theorem (\cite{F}, \cite{O},
\cite{B}, \cite{G}) rely on the infinite Ramsey theorem \cite{E} which states that every analytic subset
of \([\mathbb{N}]\) is Ramsey (see also \cite{GP}). A proof of the \(\ell_1\)-theorem
that avoids the use of Ramsey theory is given in \cite{A}.
We recall here that for 
an infinite subset \(L\) of \(\mathbb{N}\), \([L]\) stands for
the set of all of its infinite subsets. \([\mathbb{N}]\) is endowed with the topology of
point-wise convergence. A subset \(\mathcal{A}\) of \([\mathbb{N}]\)
is a Ramsey set if for every \(N \in [\mathbb{N}]\) there exists \(M \in [N]\) 
such that either \([M] \subset \mathcal{A}\), or \([M] \cap \mathcal{A} = \emptyset\).

To explain our proof of the \(\ell_1\)-theorem, we first
fix a compact metric space \(K\) and a bounded sequence \((f_n)\) of complex-valued
functions, continuous on \(K\). Given \(\epsilon > 0\), let us call 
\((f_n)\) \(\epsilon\)-{\em weak Cauchy} provided that for every \(t \in K\)
there exists \(n_0 \in \mathbb{N}\) such that \(|f_n(t) - f_m(t)| < \epsilon\)
for all \(n \geq n_0\) and \(m \geq n_0\). We note that the concept of an \(\epsilon\)-weak Cauchy
sequence is implicit in \cite{B}. Our main result is as follows
\begin{Thm} \label{MTh}
Let \((f_n)\) be a bounded sequence in a complex \(C(K)\) space and let \(\epsilon > 0\).
Then either \((f_n)\) admits an \(\epsilon\)-weak Cauchy subsequence or, for every \(\delta > 0\), there is 
a subsequence \((f_{m_n})\) satisfying 
\[\biggl \|\sum_{n=2}^\infty \lambda_n (f_{m_n} - f_{m_1}) \biggr \| \geq \bigl (\frac{\epsilon \sqrt{2}}{8} - \delta \bigr ) 
\sum_{n=2}^\infty (|\mathrm{Re} (\lambda_n)| + |\mathrm{Im} (\lambda_n)|)\]
for every finitely supported sequence of complex scalars \((\lambda_n)_{n=2}^\infty\).
\end{Thm}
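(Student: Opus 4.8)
The plan is to prove the dichotomy by a Ramsey-type argument combined with a finite combinatorial gap lemma. Suppose $(f_n)$ admits no $\epsilon$-weak Cauchy subsequence. Negating the definition, for every subsequence there is a point $t \in K$ witnessing oscillation at least $\epsilon$: there are infinitely many pairs $n < m$ (along the subsequence) with $|f_n(t) - f_m(t)| \geq \epsilon$. My first step is to make this quantitative and to split into real and imaginary parts. Writing $f_n = g_n + i h_n$ with $g_n = \mathrm{Re}\, f_n$ and $h_n = \mathrm{Im}\, f_n$, a failure to be $\epsilon$-weak Cauchy forces, at some $t$, oscillation of size $\geq \epsilon/\sqrt{2}$ in at least one of the real sequences $(g_n(t))$ or $(h_n(t))$. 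This is the source of the $\sqrt{2}$ in the constant $\tfrac{\epsilon\sqrt 2}{8}$.

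Second, I would reduce to the real-scalar, single-oscillating-sequence situation by means of a level-set separation. For a real sequence $(g_n(t))$ whose oscillation exceeds $\epsilon/\sqrt 2$ at $t$, one can find rationals $a < b$ with $b - a \geq \epsilon/\sqrt 2 - \delta'$ so that $g_n(t) < a$ for infinitely many $n$ and $g_n(t) > b$ for infinitely many $n$. The key device is to associate to each pair $(a,b)$ and each point $t$ a coloring of the infinite subsets of $\mathbb{N}$ recording which indices land below $a$ versus above $b$. Here I invoke the infinite Ramsey theorem quoted in the introduction: the relevant families are Borel (in fact closed), hence Ramsey, so after passing to a subsequence I can assume that along $(f_{m_n})$ the oscillating behavior is \emph{uniform} — the witnessing functional/point $t$ separates the indices cleanly and consistently.

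Third, with a uniformly oscillating subsequence in hand, I would prove the lower $\ell_1$-estimate directly. Given finitely supported complex scalars $(\lambda_n)$, I want to choose a single point $t \in K$ (or a single extreme functional on $C(K)$, i.e.\ a signed evaluation) that nearly maximizes $\bigl|\sum_n \lambda_n (f_{m_n}(t) - f_{m_1}(t))\bigr|$. The strategy is to group indices by the sign patterns of $\mathrm{Re}\,\lambda_n$ and $\mathrm{Im}\,\lambda_n$ and to select $t$ so that $f_{m_n}(t)$ sits near its high level when the corresponding real (or imaginary) coefficient is positive and near its low level otherwise. Summing the contributions, each term contributes at least $(\tfrac{\epsilon}{\sqrt 2} - \delta')\,|\mathrm{Re}\,\lambda_n|$ (respectively the imaginary analogue), and after accounting for the factors lost in splitting real/imaginary parts and in handling both signs simultaneously one arrives at the advertised constant $\tfrac{\epsilon\sqrt 2}{8} - \delta$. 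The subtraction of the base term $f_{m_1}$ is what lets me recenter so that the oscillation levels straddle a common value, turning the two-sided separation into a clean additive lower bound.

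The main obstacle I anticipate is the third step: choosing a \emph{single} point $t$ that works simultaneously for all coordinates of an arbitrary finitely supported scalar sequence. Pointwise oscillation at possibly different points for different indices is not enough; I need the Ramsey-stabilized subsequence to guarantee a coherent family of level sets whose finite intersections are nonempty, so that a common witness $t$ exists. Controlling this — essentially a finite-intersection / combinatorial selection built on top of the Ramsey step, and tracking exactly how much of the oscillation survives when the signs of $\mathrm{Re}\,\lambda_n$ and $\mathrm{Im}\,\lambda_n$ are prescribed — is where the constant $\tfrac{1}{8}$ (and the precise bookkeeping behind it) will have to be nailed down.
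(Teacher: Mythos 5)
You have the right architecture in mind (Ramsey stabilization followed by a pointwise lower estimate), but the proposal leaves unproved exactly the step on which the whole proof turns, and sets up the stabilization in a form that cannot handle complex scalars. The decisive ingredient, which you yourself flag as the ``main obstacle,'' is the \emph{independence} property of Lemma \ref{L3}: a fixed pair of cells \(\Delta_1,\Delta_2\subset\mathbb{C}\) of side \(\delta_1\) with \((\Delta_1\times\Delta_2)\cap F_\epsilon\neq\emptyset\), such that for \emph{every} pair of disjoint index sets \(J_1,J_2\) there is a \emph{single} \(t\in K\) with \(f_n(t)\in\Delta_1\) for all \(n\in J_1\) and \(f_n(t)\in\Delta_2\) for all \(n\in J_2\). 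Ramsey homogeneity by itself only yields that every infinite subset \((l_n)\), split into the consecutive pairs \((l_{2n-1},l_{2n})\), admits one witnessing point; upgrading this to arbitrary disjoint \(J_1,J_2\) requires a concrete device. In the paper this is done by pigeonholing over a finite grid of \(\delta_1\)-squares of \([-A,A]^2\) (legitimate because the families \(\mathcal{D}(P,E)\) are pointwise closed, Lemma \ref{L1}, hence Ramsey), and then keeping only every third element, \(M=\{p_{3n-1}\}\), so that any prescribed disjoint pair \((J_1,J_2)\) can be interleaved with filler indices into an admissible infinite set whose odd positions contain \(J_1\) and even positions contain \(J_2\). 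Your sketch asserts that after Ramsey ``the witnessing point separates the indices cleanly and consistently,'' but nothing in it performs this conversion; that is a genuine gap, not bookkeeping.

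The second problem is structural. Your reduction to a single real sequence (\(\mathrm{Re}\,f_n\) or \(\mathrm{Im}\,f_n\) oscillating by \(\epsilon/\sqrt2\)) discards control of the other coordinate, and it cannot be recovered at the estimate stage: for complex \(\lambda_n=a_n+ib_n\), both the real and the imaginary part of \(\sum\lambda_n(f_n(t)-f_1(t))\) mix \(\mathrm{Re}\,f_n(t)\) and \(\mathrm{Im}\,f_n(t)\), so if only the level sets of \(\mathrm{Re}\,f_n\) are stabilized, the unstabilized values \(\mathrm{Im}\,f_n(t)\) can cancel whatever is gained. This is precisely why the paper localizes \(f_n(t)\) in two-dimensional squares rather than one-dimensional level sets. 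Relatedly, your plan to pick \(t\) so that each \(f_{m_n}(t)\) sits high or low \emph{according to the sign of its own coefficient} is not available: a single point \(t\) offers only one dichotomy \(\Delta_1\) versus \(\Delta_2\). Lemma \ref{L4} instead pigeonholes the coefficients into one of four sign patterns (cost: a factor \(1/4\)), sends that block into \(\Delta_2\) and everything else, including the anchor index \(1\), into \(\Delta_1\) near \(0\), and then applies the elementary inequality \((ax-by)^2+(bx+ay)^2\geq(a^2+b^2)/2\) for \(x,y\geq0\), \(x+y=1\) (cost: a factor \(1/\sqrt2\)). It is this inequality, not the real/imaginary splitting of the functions, that produces \(\epsilon\sqrt2/8=\epsilon/(4\sqrt2)\); as it stands, your accounting of the constant would not close.
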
 
If the second alternative of Theorem \ref{MTh} holds, then \((f_{m_n} - f_{m_1})_{n=2}^\infty\)
\(C\)-dominates the usual \(\ell_1\)-basis, where \(C = \frac{\epsilon \sqrt{2}}{8} - \delta \).
It follows now, by a result of H. Knaust and E. Odell (Proposition 4.2 of \cite{KO} which holds for complex Banach spaces as well),
that there is some \(k \geq 2\) so that \((f_{m_n})_{n=k}^\infty\) \(C\)-dominates the usual \(\ell_1\)-basis. 
Therefore, if \((f_n)\) admits no \(\ell_1\)-subsequence, then every subsequence of \((f_n)\)
admits, for all \(\epsilon > 0\), an \(\epsilon\)-weak Cauchy subsequence and thus
a weak Cauchy subsequence. On the other hand, if \((f_n)\) lacks \(\epsilon\)-weak Cauchy subsequences for some fixed \(\epsilon > 0\),
then Theorem \ref{MTh} combined with the above cited result from \cite{KO}, yields the quantitative version of the \(\ell_1\)-theorem obtained by E. Behrends in \cite{B}.

We use standard Banach space facts and terminology as may be found in \cite{LT}.

{\bf Notation}: Let \(E\) be a closed subset of \(\mathbb{C}^2\) and \(P \in [\mathbb{N}]\).
Define
\[\mathcal{D}(P,E) = \{ L \in [P], \, L = (l_n) : \, \exists \, t \in K, \,
(f_{l_{2n-1}} (t), f_{l_{2n}} (t)) \in E, \, \forall \, n \in \mathbb{N} \}\] 
{\bf Notation}: For \(\epsilon > 0\) we set
\(F_\epsilon = \{(z_1, z_2 ) \in \mathbb{C}^2 : \,
|z_1 - z_2 | \geq \epsilon\}\).
\begin{Lem} \label{L1}
\(\mathcal{D}(P,E)\) is point-wise closed in \([\mathbb{N}]\).
\end{Lem}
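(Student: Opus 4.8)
The plan is to argue by sequences, which is legitimate because \([\mathbb{N}]\) with the topology of point-wise convergence embeds in the metrizable space \(\mathbb{N}^{\mathbb{N}}\); hence sequential closedness of \(\mathcal{D}(P,E)\) suffices. So I would take an arbitrary sequence \((L^{(k)})_k\) in \(\mathcal{D}(P,E)\) converging point-wise to some \(L = (l_n)\), and show \(L \in \mathcal{D}(P,E)\). Here point-wise convergence means that for each fixed coordinate \(n\) the integers \(l^{(k)}_n\) are eventually equal to \(l_n\). In particular each \(l_n\) lies in \(P\), and strict monotonicity of the enumerations passes to the limit (for large \(k\), \(l_n = l^{(k)}_n < l^{(k)}_{n+1} = l_{n+1}\)), so \(L \in [P]\) to begin with.

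Next, since each \(L^{(k)}\) belongs to \(\mathcal{D}(P,E)\), I would fix for every \(k\) a witness \(t_k \in K\) satisfying \((f_{l^{(k)}_{2n-1}}(t_k), f_{l^{(k)}_{2n}}(t_k)) \in E\) for all \(n\). The key use of the standing hypotheses is that \(K\) is compact metric, so I may pass to a subsequence along which \(t_k \to t\) for some \(t \in K\). I would then claim that this single \(t\) is a witness for \(L\).

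To prove the claim, fix \(n\). By point-wise convergence there is \(k_0\) with \(l^{(k)}_{2n-1} = l_{2n-1}\) and \(l^{(k)}_{2n} = l_{2n}\) for all \(k \geq k_0\), so for such \(k\) in the chosen subsequence the pair \((f_{l_{2n-1}}(t_k), f_{l_{2n}}(t_k))\) lies in \(E\). Letting \(k \to \infty\), continuity of \(f_{l_{2n-1}}\) and \(f_{l_{2n}}\) on \(K\) gives convergence of this pair to \((f_{l_{2n-1}}(t), f_{l_{2n}}(t))\), and since \(E\) is closed the limit remains in \(E\). As \(n\) was arbitrary, \(t\) witnesses \(L \in \mathcal{D}(P,E)\), completing the argument.

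The only delicate point is that one fixed \(t\) must simultaneously serve every coordinate \(n\); this is precisely why compactness of \(K\) is invoked once, up front, to extract a single limit \(t\), after which each coordinate condition follows separately from the continuity of the \(f_{l_j}\) and the closedness of \(E\). No diagonal argument is needed, and the roles of the two standing assumptions are transparent: compactness produces the witness, closedness of \(E\) together with continuity preserves membership in the limit.
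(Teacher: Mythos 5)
Your proof is correct and follows essentially the same route as the paper's: extract a witness $t_k$ for each term of the sequence, use compactness of $K$ to obtain a limit (the paper uses a cluster point rather than a convergent subsequence), and then combine continuity of the $f_j$'s with closedness of $E$ coordinate by coordinate. The only cosmetic difference is that the paper picks witnesses along terms whose initial segments already agree with $L$, whereas you pick witnesses for every term and invoke eventual coordinatewise agreement; both are the same argument.
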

\begin{proof}
Let \((L_m)\) be a sequence in \(\mathcal{D}(P,F)\) converging point-wise to
some \(L \in [P]\), \(L=(l_n)\). Fix \(n \in \mathbb{N}\) and choose \(m_n \in \mathbb{N}\)
such that \(\{l_k : \, k \leq 2n \}\) is an initial segment of \(L_{m_n}\).
Next choose \(t_n \in K\) so that  
\((f_{l_{2j-1}} (t_n), f_{l_{2j}} (t_n)) \in E\) for all \(j \leq n\).
Let \(t \in K\) be a cluster point of the sequence \((t_n)\). Since the
\(f_j\)'s are continuous and \(E\) is closed, we obtain that
\((f_{l_{2n-1}} (t), f_{l_{2n}} (t)) \in E\) for all \(n \in \mathbb{N}\)
and so \(L \in \mathcal{D}(P,E)\)
completing the proof of the lemma.
\end{proof}
\begin{remark}
Assume that \(\mathcal{D}(P,F_\epsilon)\)
is a proper subset of \([P]\) for all \(P \in [\mathbb{N}]\) and \(\epsilon > 0\).
Since \(\mathcal{D}(P,F_\epsilon)\) is Ramsey, an easy diagonalization argument
shows that \((f_n)\) admits a weak Cauchy subsequence.
\end{remark}
\begin{Lem} \label{L3}
Assume that \([\mathbb{N}] = \mathcal{D}(\mathbb{N},F_\epsilon)\) for some
\(\epsilon > 0\). Let \(\delta_1 > 0\) and choose an integer multiple \(A\) of \(\delta_1\), \(A > \delta_1\), so that 
\(\|f_n\| \leq A\) for all \(n \in \mathbb{N}\).
Then there exist \(M \in [\mathbb{N}]\) and
two squares \(\Delta_1\) and \(\Delta_2\) contained in \([-A,A]^2\) with sides
parallel to the axes and equal to \(\delta_1\) so that
\begin{enumerate}
\item \((\Delta_1 \times \Delta_2) \cap F_\epsilon \ne \emptyset\).
\item For every choice \(J_1\) and \(J_2\) of pairwise disjoint subsets of \(M\)
there exists \(t \in K\) so that \(f_n(t) \in \Delta_s\) for all \(n \in J_s\) and \(s \leq 2\).
\end{enumerate}
\end{Lem}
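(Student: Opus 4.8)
The plan is to collapse the separation hypothesis onto a single pair of grid squares by a Ramsey argument, and then to bend the rigid (alternating) realization this yields into an arbitrary one by a spacing trick. First I would tile the box $[-A,A]^2$, which contains every value $f_n(t)$ since $\|f_n\|\le A$, by the finitely many closed squares of side $\delta_1$ with sides parallel to the axes and corners on the $\delta_1$-grid; call an ordered pair $(\Delta,\Delta')$ of such squares \emph{admissible} if $(\Delta\times\Delta')\cap F_\epsilon\neq\emptyset$. By Lemma \ref{L1} each set $\mathcal D(P,\Delta\times\Delta')$ is point-wise closed, hence Ramsey. Since there are only finitely many pairs of squares, a finite diagonalization produces $P_0\in[\mathbb N]$ deciding every pair simultaneously: for each $(\Delta,\Delta')$ either $[P_0]\subseteq\mathcal D(P_0,\Delta\times\Delta')$ or $[P_0]\cap\mathcal D(P_0,\Delta\times\Delta')=\emptyset$.

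Next I would pin down one admissible pair. The hypothesis passes to subsequences, so $[P_0]=\mathcal D(P_0,F_\epsilon)$. Fix any $L=(l_n)\in[P_0]$ and a witness $t\in K$ with $(f_{l_{2n-1}}(t),f_{l_{2n}}(t))\in F_\epsilon$ for all $n$. Each such pair lies in the box, hence in $\Delta\times\Delta'$ for the squares containing its two coordinates, and that pair is admissible because the point itself is in $F_\epsilon$. By pigeonhole a single admissible pair $(\Delta_1,\Delta_2)$ occurs for infinitely many $n$, and the corresponding pairs $(l_{2n-1},l_{2n})$ assemble, in increasing order, into an element of $[P_0]$ lying in $\mathcal D(P_0,\Delta_1\times\Delta_2)$. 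By the dichotomy this forces $[P_0]\subseteq\mathcal D(P_0,\Delta_1\times\Delta_2)$. Thus $(\Delta_1,\Delta_2)$ is admissible, giving $(1)$, and along every subsequence of $P_0$ a single point sends the odd-ranked terms into $\Delta_1$ and the even-ranked terms into $\Delta_2$.

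Finally I would obtain $(2)$. Writing $P_0=(p_k)$, I set $M=\{p_{2k+1}:k\ge1\}$, reserving the intervening terms of $P_0$ as padding. Each set $f_n^{-1}(\Delta_s)$ is closed, so by compactness of $K$ and the finite intersection property it suffices to realize every finite disjoint pair $J_1,J_2\subseteq M$. Given such $J_1,J_2$, I would enumerate $J_1\cup J_2$ increasingly and interleave it with reserved padding terms of $P_0$ to build an increasing $L\in[P_0]$ in which every element of $J_1$ sits at an odd rank and every element of $J_2$ at an even rank; the spacing of $M$ guarantees at least one padding term of $P_0$ strictly below the least element of $J_1\cup J_2$ and at least one strictly between any two consecutive elements of $J_1\cup J_2$, which is exactly the freedom needed to force the prescribed parities. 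Then $[P_0]\subseteq\mathcal D(P_0,\Delta_1\times\Delta_2)$ supplies a point $t$ with $f_n(t)\in\Delta_1$ for $n\in J_1$ and $f_n(t)\in\Delta_2$ for $n\in J_2$, the padding terms landing harmlessly.

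I expect the main obstacle to be this last step. The conclusion $[P_0]\subseteq\mathcal D(P_0,\Delta_1\times\Delta_2)$ realizes only the fixed alternating pattern along a subsequence, and a naive attempt to read off an arbitrary pattern fails precisely when two same-type indices are adjacent in the ambient set, since their ranks are then forced to opposite parities. The sparsification of $M$, which always leaves a padding term between prescribed indices, is what converts the single alternating realization into full independence; getting this bookkeeping right, together with the pigeonhole over the finitely many admissible products that isolates one pair, is where the real work lies.
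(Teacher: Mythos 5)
Your proposal is correct and takes essentially the same route as the paper's proof: a finite grid of $\delta_1$-squares, the Ramsey property of the sets from Lemma \ref{L1} applied over the finitely many pairs of squares, a pigeonhole on a witnessing sequence to force the positive Ramsey alternative for one admissible pair, and a sparsified $M$ whose reserved padding terms convert the fixed alternating realization into arbitrary prescribed patterns. The only cosmetic differences are the order of the Ramsey applications (per-pair dichotomy first versus Ramsey on the union first), your every-other spacing in place of the paper's every-third spacing (both suffice for the parity bookkeeping), and your optional compactness/finite-intersection reduction to finite $J_1,J_2$, which the paper avoids since the definition of $\mathcal{D}$ already provides a single point $t$ for all pairs simultaneously.
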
 
\begin{proof}
Let \(\Pi\) be a finite partition of \([-A,A]^2\) into pairwise non-overlapping squares
having sides parallel to the axes and equal to \(\delta_1\). Define
\[\mathcal{D} = \cup_{(\Delta_1, \Delta_2) \in \Pi^2} \, \mathcal{D}[\mathbb{N}, 
(\Delta_1 \times \Delta_2) \cap F_\epsilon]\]
This is a point-wise closed subset of \([\mathbb{N}]\), thanks to Lemma \ref{L1}, and therefore
it is Ramsey. Let \(L \in [\mathbb{N}]\), \(L=(l_n)\). Since \(L \in \mathcal{D}(\mathbb{N},F_\epsilon)\)
there exist 
\(t \in K\), \(I \in [\mathbb{N}]\) and \((\Delta_1, \Delta_2) \in \Pi^2\)
so that \((f_{l_{2j-1}}(t), f_{l_{2j}}(t)) \in (\Delta_1 \times \Delta_2) \cap F_\epsilon \)
for all \(j \in I\). It follows that \(L_1 \in [L] \cap \mathcal{D}\), where 
\(L_1 = \cup_{j \in I} \{l_{2j-1}, l_{2j}\}\). Hence, \([L] \cap \mathcal{D} \ne \emptyset\)
for all \(L \in [\mathbb{N}]\). The infinite Ramsey Theorem now yields \(N \in [\mathbb{N}]\)
so that \([N] \subset \mathcal{D}\). A second application of the infinite Ramsey Theorem provides
us some \(P \in [N]\) and \((\Delta_1, \Delta_2) \in \Pi^2\) so that
\([P] \subset \mathcal{D}[\mathbb{N}, 
(\Delta_1 \times \Delta_2) \cap F_\epsilon]\). Finally, if \(P=(p_n)\),
let \(M= \{p_{3n-1}: \, n \in \mathbb{N}\}\). Clearly, \(M\), \(\Delta_1\) and \(\Delta_2\)
satisfy \((1)\) and \((2)\).  
\end{proof}
\begin{Lem} \label{L4}
Let \(\epsilon > 0\), \(\delta > 0\) and
\( 0 < \delta_1 <  \delta /5\).
Assume that \((1)\) and \((2)\) of Lemma \ref{L3} are fulfilled with \(M = \mathbb{N}\),
\(\Delta_1 = [0, \delta_1]^2\)
and \(\Delta_2\) contained in the first quadrant.
Then \[ \biggl \|\sum_{n \geq 2} \lambda_n (f_n - f_1) \biggr \| \geq 
(\frac{\epsilon \sqrt{2}}{8} - \delta) \sum_{n \geq 2} ( | \mathrm{ Re } (\lambda_n) |  + | \mathrm{ I m } (\lambda_n) |) \]
for every finitely supported sequence of complex scalars \((\lambda_n)_{ n \geq 2}\). 
\end{Lem}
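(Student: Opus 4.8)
The plan is to reduce the asserted vector inequality to a single scalar inequality about subset sums of the $\lambda_n$, the reduction being driven by the separation in $(1)$ and the simultaneous localization in $(2)$. First I would fix the data from $(1)$: choose $w_1\in\Delta_1$ and $w_2\in\Delta_2$ with $|w_1-w_2|\ge\epsilon$, and record that, each $\Delta_s$ being a square of side $\delta_1$, one has $\mathrm{diam}(\Delta_s)\le\sqrt2\,\delta_1$. Writing $g_n=f_n-f_1$, $a_n=\mathrm{Re}(\lambda_n)$ and $b_n=\mathrm{Im}(\lambda_n)$, the target is $\|\sum_{n\ge2}\lambda_n g_n\|\ge(\tfrac{\epsilon\sqrt2}{8}-\delta)\sum_{n\ge2}(|a_n|+|b_n|)$.

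The core is a localization estimate valid for every finite $S\subseteq\{2,3,\dots\}$. I would apply $(2)$ to the disjoint pair $J_2=S$, $J_1=\{1\}\cup(\mathbb N\setminus S)$ to obtain $t\in K$ with $f_1(t)\in\Delta_1$, with $f_n(t)\in\Delta_2$ for $n\in S$, and with $f_n(t)\in\Delta_1$ for every other index. Then $g_n(t)=(w_2-w_1)+e_n$ with $|e_n|\le2\sqrt2\,\delta_1$ when $n\in S$, while $|g_n(t)|\le\sqrt2\,\delta_1$ when $n\notin S$. Hence $\sum_{n\ge2}\lambda_n g_n(t)=(w_2-w_1)\sum_{n\in S}\lambda_n+R$, where $|R|\le2\sqrt2\,\delta_1\sum_{n\ge2}|\lambda_n|\le2\sqrt2\,\delta_1\sum_{n\ge2}(|a_n|+|b_n|)$. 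Evaluating the supremum norm at this particular $t$ and using $|w_2-w_1|\ge\epsilon$ gives, for each finite $S$,
\[\Big\|\sum_{n\ge2}\lambda_n g_n\Big\|\ \ge\ \epsilon\Big|\sum_{n\in S}\lambda_n\Big|\ -\ 2\sqrt2\,\delta_1\sum_{n\ge2}(|a_n|+|b_n|).\]

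It then remains to choose $S$ so that $|\sum_{n\in S}\lambda_n|$ is a definite fraction of $\sum(|a_n|+|b_n|)$. Running $S$ through the four sign sets $\{a_n>0\}$, $\{a_n<0\}$, $\{b_n>0\}$, $\{b_n<0\}$ and bounding $|\sum_{n\in S}\lambda_n|$ below by $|\mathrm{Re}\sum_{n\in S}\lambda_n|$ or $|\mathrm{Im}\sum_{n\in S}\lambda_n|$, the four resulting lower bounds are $\sum_n\max(a_n,0)$, $\sum_n\max(-a_n,0)$, $\sum_n\max(b_n,0)$ and $\sum_n\max(-b_n,0)$, whose total is $\sum_n(|a_n|+|b_n|)$; since the largest of four numbers is at least their average, some $S$ gives $|\sum_{n\in S}\lambda_n|\ge\frac14\sum_n(|a_n|+|b_n|)$. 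Substituting, the norm is at least $(\frac{\epsilon}{4}-2\sqrt2\,\delta_1)\sum_n(|a_n|+|b_n|)$, and since $2\sqrt2\,\delta_1<2\sqrt2\,\delta/5<\delta$ and $\frac\epsilon4\ge\frac{\epsilon\sqrt2}{8}$, this is at least $(\frac{\epsilon\sqrt2}{8}-\delta)\sum_n(|a_n|+|b_n|)$, as required.

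I expect the only delicate point to be making the localization estimate uniform in $S$ while keeping the remainder $R$ inside the $\delta$-budget; this is exactly the role of the hypothesis $\delta_1<\delta/5$, which forces $2\sqrt2\,\delta_1<\delta$. Everything else is either one of the two hypotheses of the Lemma or the elementary averaging in the selection step. I note that this route uses only $|w_2-w_1|\ge\epsilon$ and $\mathrm{diam}(\Delta_s)\le\sqrt2\,\delta_1$; the normalization $\Delta_1=[0,\delta_1]^2$ with $\Delta_2$ in the first quadrant is convenient for bookkeeping—indeed it lets one argue with $\mathrm{Re}$ and $\mathrm{Im}$ separately in place of the complex modulus—but is not otherwise essential to this argument.
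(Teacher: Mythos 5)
Your proof is correct, and its decisive step is genuinely different from the paper's. Both arguments use hypothesis (2) in the same way: a distinguished finite set of indices is sent into \(\Delta_2\), the rest of the support together with the index \(1\) into \(\Delta_1\), and the norm is bounded below by evaluating at the resulting point \(t\); both also select a subset carrying at least a quarter of the mass \(\sum_{n\ge 2}(|a_n|+|b_n|)\). The difference is in how that subset is chosen and how the separation \(\epsilon\) is exploited. The paper takes \(J_2\) sign-coherent in \emph{both} coordinates (all \(a_n\) of one sign and all \(b_n\) of one sign) and must then convert \(|\sum_{n\in J_2}\lambda_n z_2|\) back into the mass; this is exactly where the normalizations enter (\(\Delta_2\) in the first quadrant gives \(z_2=a+ib\) with \(a,b\ge 0\), and \(\Delta_1=[0,\delta_1]^2\) gives \(|z_2|\ge\epsilon-\sqrt{2}\,\delta_1\)), together with the elementary inequality \((ax-by)^2+(bx+ay)^2\ge (a^2+b^2)/2\) for \(x,y\ge 0\), \(x+y=1\), which costs a further factor \(1/\sqrt{2}\) and produces the constant \(\epsilon\sqrt{2}/8\). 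You instead test the four sets cut out by a \emph{single} sign condition (\(a_n>0\), \(a_n<0\), \(b_n>0\), \(b_n<0\)); since the corresponding four lower bounds \(\sum_n\max(\pm a_n,0)\), \(\sum_n\max(\pm b_n,0)\) add up to the full mass, one of these sets \(S\) satisfies \(|\sum_{n\in S}\lambda_n|\ge\frac14\sum_{n\ge 2}(|a_n|+|b_n|)\) outright, no algebraic identity needed, and the separation enters through the difference vector \(w_2-w_1\) rather than through \(|z_2|\). This buys a sharper constant (\(\epsilon/4-\delta\) in place of \(\epsilon\sqrt{2}/8-\delta\)) and, as you observe, makes the normalization hypotheses superfluous, so the reduction \(g_n=\sigma f_n+z\) in the proof of Theorem \ref{MTh} could be dropped altogether. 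One cosmetic point: you invoke (2) with the infinite set \(J_1=\{1\}\cup(\mathbb{N}\setminus S)\), which is permitted as (2) is stated, but only the indices in \((\{1\}\cup\mathrm{supp}\,(\lambda_n))\setminus S\) play any role in your estimate, so \(J_1\) may just as well be taken finite, exactly as in the paper.
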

\begin{proof}
Since \((\Delta_1 \times \Delta_2) \cap F_\epsilon \ne \emptyset\) there exists
\(z_2 \in \Delta_2\) so that \(|z_2| \geq \epsilon - \delta_1 \sqrt{2}\).
Let \((\lambda_n)_{ n \geq 2}\) be a finitely supported sequence
of complex scalars and set \(a_n = \mathrm{Re}(\lambda_n)\), \(b_n = \mathrm{Im}(\lambda_n)\),
for all \(n \geq 2\). 
We next choose a finite subset
\(J_2 \) of \(\{ n \in \mathbb{N} : \, n \geq 2 \}\) so that each one of the sequences \((a_n)_{n \in J_2}\) and
\((b_n)_{n \in J_2}\) consists of terms with equal signs and 
\[\sum_{n \in J_2} (|a_n| + |b_n|) \geq (1/4) \sum_{n \geq 2} (|a_n| + |b_n|)\] 
We may suppose that \(a_n b_n \geq 0\) for all \(n \in J_2\). The case 
\(a_n b_n \leq 0\) for all \(n \in J_2\) is handled with similar arguments.
Let
\(J_1\) be the complement of \(J_2\) in the support of \((\lambda_n)_{n \geq 2}\). By our hypothesis, there is
some \(t \in K\) such that \(f_n(t) \in \Delta_2\) for all \(n \in J_2\), while 
\(f_n(t) \in \Delta_1\) for all \(n \in J_1 \cup \{1\}\).
Note that
\begin{align} \label{E1}
\biggl \| \sum_{n \geq 2} \lambda_n (f_n - f_1) \biggr \| &\geq \biggl | \sum_{n \in J_2} \lambda_n f_n(t) \biggr | -
\sum_{n \in J_1} |\lambda_n| |f_n(t)|  - \sum_{n \geq 2} |\lambda_n | |f_1(t) | \\
&\geq \biggl | \sum_{n \in J_2} \lambda_n f_n (t) \biggr | - 2 \delta_1 \sqrt{2} \sum_{n \geq 2} |\lambda_n|
\notag \\
&\geq \biggl | \sum_{n \in J_2} \lambda_n z_2 \biggr | - \sum_{n \in J_2} |\lambda_n| |f_n(t) - z_2| 
- 2 \delta_1 \sqrt{2} \sum_{n \geq 2} |\lambda_n| \notag \\
&\geq \biggl | \sum_{n \in J_2} \lambda_n z_2 \biggr | - 3 \delta_1 \sqrt{2} \sum_{n \geq 2} |\lambda_n| \notag
\end{align}
We next write \(z_2 = a + i b\) where \(a\) and \(b\) are non-negative reals and estimate 
\begin{align} \label{E2}
\biggl | \sum_{n \in J_2} \lambda_n z_2 \biggr |^2 &=  
\biggl | \sum_{n \in J_2} (a_n a - b_n b ) \biggr |^2  +
\biggl | \sum_{n \in J_2} (a_n b + b_n a ) \biggr |^2  \\
&= \biggl | a \sum_{n \in J_2}  |a_n|  - b \sum_{n \in J_2} |b_n|   \biggr |^2  +
\biggl | b \sum_{n \in J_2} |a_n|  + a \sum_{n \in J_2} |b_n|  \biggr |^2 \notag \\
&\geq \frac{a^2 + b^2}{2} \biggl (\sum_{n \in J_2} (|a_n| +|b_n|) \biggr )^2 
\geq (|z_2|^2 /32) \biggl (\sum_{n \geq 2} (|a_n| +|b_n| ) \biggr )^2 \notag
\end{align}
because \((ax - by)^2 + (bx + ay)^2 \geq (a^2 + b^2)/2\), whenever \(x\), \(y\) are non-negative scalars
with \(x +y =1\).
Since \(|z_2| \geq \epsilon - \sqrt{2} \delta_1\), it follows from (\ref{E1}), (\ref{E2}) that
\[\biggl \| \sum_{n \geq 2} \lambda_n (f_n - f_1) \biggr \| \geq (\frac{\epsilon - \delta_1 \sqrt{2}}{4 \sqrt{2}} - 3 \delta_1 \sqrt{2} ) 
\sum_{n \geq 2} ( | \mathrm{ Re } (\lambda_n) |  + | \mathrm{ I m } (\lambda_n) |)\]
which yields the assertion of the lemma.
\end{proof}
\begin{proof}[Proof of Theorem \ref{MTh}]
Suppose that \((f_n)\) has no subsequence which is \(\epsilon\)-weak Cauchy. 
It follows that \([P] \cap \mathcal{D}(\mathbb{N}, F_\epsilon) = \emptyset\)
for no \(P \in [\mathbb{N}]\). Since \(\mathcal{D}(\mathbb{N}, F_\epsilon)\)
is point-wise closed, by Lemma \ref{L1}, the infinite Ramsey theorem implies that
\(\mathcal{D}(N, F_\epsilon) = [N]\) for some \(N \in [\mathbb{N}]\). Without loss of generality, by
relabeling if necessary, we may assume that \(N = \mathbb{N}\). Let \(\delta > 0\) and choose 
\(0 < \delta_1 < \delta /5\).
Applying Lemma \ref{L3},
passing to a subsequence and relabeling if necessary, we may also assume that there exist
two squares \(\Delta_1\) and \(\Delta_2\) with sides
parallel to the axes and equal to \(\delta_1\) so that
\begin{enumerate}
\item \((\Delta_1 \times \Delta_2) \cap F_\epsilon \ne \emptyset\).
\item For every choice \(J_1\) and \(J_2\) of pairwise disjoint subsets of \(\mathbb{N}\)
there exists \(t \in K\) so that \(f_n(t) \in \Delta_s\) for all \(n \in J_s\) and \(s \leq 2\).
\end{enumerate}
By replacing \((f_n)\) by \((g_n)\), where for all \(n \in \mathbb{N}\)
\(g_n = \sigma f_n + z\) for suitable choices
of \(\sigma \in \{1,i\}\) and \(z \in \mathbb{C}\), we may assume that
\(\Delta_1 = [0, \delta_1]^2\)
and that \(\Delta_2\) is contained in the first quadrant. The assertion of the theorem now follows by applying Lemma 
\ref{L4}.
\end{proof}
A direct modification of the preceding arguments shows that theorem \ref{MTh} holds for real \(C(K)\) spaces with the constant \(\sqrt{2}/8\)
being replaced by \(1/2\).

\end{document}